\theoremstyle{plain}
\newtheorem{theorem}{Theorem}
\newtheorem{lemma}[theorem]{Lemma}
\newtheorem{proposition}[theorem]{Proposition}
\theoremstyle{definition}
\theoremstyle{remark}
\newcommand{\Z}{\mathbb Z}    
\newcommand{\R}{\mathbb R}    
\newcommand{\C}{\mathbb C}    
\newcommand{\PP}{\mathbb P}   
\newcommand{\T}{\mathbb T}    
\newcommand{\TT}{\mathbb{T}}
\newcommand{\pp}{\mathcal{P}}
\newcommand{\cc}{\mathcal{C}}
\newcommand{\suchthat}{\ : \ }
\newcommand{\<}{\langle}   
\renewcommand{\>}{\rangle} 
\newcommand{\am}{{\mathcal{A}}}
\newcommand{\bsd}{\operatorname{bsd}}
\newcommand{\dsd}{\operatorname{dsd}}
\newcommand{\ignore}[1]{\relax}
\newcommand{\Conv}{\operatorname{Conv}}
\begin{document}

\title{Tailoring a pair of pants: the phase tropical version}

\author{Ilia Zharkov}
\address{Kansas State University, 138 Cardwell Hall, Manhattan, KS 66506}
\email{zharkov@ksu.edu}

\begin{abstract}
We show that the phase tropical pair-of-pants $\pp^\circ \subset {(\C^*)^n}$ is (ambient) isotopic to the complex pair-of-pants $P^\circ \subset {(\C^*)^n}$. This paper can serve as an addendum to \cite{RZ} where an isotopy between complex and ober-tropical pairs-of-pants was shown. Thus the all three versions are isotopic.
\end{abstract}

\maketitle

{\hypersetup{linkbordercolor=white} 
\footnote{The research of the author is partially supported by the Simons Collaboration grant A20-0125-001. Also the author thanks Gabe Kerr and Helge Ruddat for some valuable discussions.
}
}

\section{Introduction}
The $(n-1)$-dimensional {pair-of-pants} $P^\circ \subset (\C^*)^n$ is 
the main building block for many problems in complex and symplectic geometries. Its projection  under the $\log |z|$ map is called the \emph{amoeba} and its projection via the argument map is the \emph{coamoeba}. The phase tropical pair-of-pants $\pp^\circ  \subset (\C^*)^n$ is the fibration over the tropical hyperplane with fibers over tropical strata given by the corresponding coamoebas. It is natural to consider closed spaces so we compactify $ (\C^*)^n$ to $\Delta\times \T^n$, where $\Delta$ is the standard $n$-simplex and $\T^n$ is the $n$-torus. 
The main result of the paper is Theorem \ref{thm:main} which states that the closures $\pp$  and $P$ of the two versions of pairs-of-pants in $\Delta\times \T^n$ are (ambient) isotopic (in the PL category).

Instead of trying to build an isotopy explicitly we build regular cell decompositions of both pairs and show that they are homeomorphic. The cell structures respect the natural stratification of $\Delta \times \T^n$, so the homeomorphisms will glue well at the boundary. Thus with a tiny bit of effort the isotopy can be extended to any general affine hypersurface by using the pair-of-pants decomposition of \cite{PP} and \cite{Viro11}. 

The main application of the isotopy is to address the following question in mirror symmetry. Given an integral affine manifold with singularities we want to build a topological SYZ fibration \cite{SYZ} with discriminant in codimension~2 (rather than codimension one), see \cite{gross} for the quintic 3-fold case. To compare with the ober-tropical version, see \cite{RZ2}, the phase tropical has the advantage is that no unwiggling is required when gluing local models. A disadvantage, if any, is that the singular fibers, though still half-dimensional, are more complicated.

\section{CW-structure of the complex and phase tropical pair of pants}

\subsection{Notations} 
We set $\hat n:= \{0,\dots,n\}$.
We will think of $(\C^*)^{n} \cong (\C^*)^{n+1}/\C^*$ as the product $\Delta^\circ \times \T^n$ where $\Delta^\circ$ is the interior of the $n$-simplex 
$$\Delta:=\left\{(x_0, \dots, x_n) \in \R^{n+1} \suchthat x_i\ge 0, \ \sum x_i =1 \right\}
$$ 
and $\T^n := (\R/2\pi \Z)^{n+1}/(\R/2\pi \Z) $ is the $n$-torus with homogeneous coordinates $[\theta_0, \dots, \theta_n]$. It is more natural to work with closed spaces so we will compactly $(\C^*)^{n}$ to  $\Delta \times \T^n$ and all subspaces in $(\C^*)^{n}$ by taking their closures in $\Delta \times \T^n$. We will denote by $\bsd\Delta$ the first barycentric subdivision of $\Delta$. We will also consider the {\bf dualizing} subdivision
$\dsd \Delta$ which is a coarsening of $\bsd\Delta$ by combining all simplices in $\bsd\Delta$ from a single interval $[I,J]$ together. 
That is, a cell $\Delta_{IJ}$ in $\dsd \Delta$ is
$$\Delta_{IJ}:= \Conv\{ \hat \Delta_K \suchthat I\subseteq K \subseteq J \},
$$
where $\hat \Delta_K$ stands for the barycenter of $\Delta_K$. 

The {\bf hypersimplex} $\Delta^n(2) \subset \Delta^n$ is obtained from the ordinary simplex by cutting the corners half-way. That is, 
$$\Delta^n(2):=\{(x_0,\dots,x_n) \in \R^{n+1}  \suchthat \sum x_i=2\pi  \text{ and } 0\leq x_i \leq \pi \}.
$$
We will use $2 \pi=1$ for the amoeba and $2\pi=6.28\dots$ for the coamoeba.

\subsection{Two stratifications of $\Delta \times {\T}^n$} 
The stratification $\{\Delta_J\}$ of the simplex $\Delta$ is given by its face lattice, namely by the non-empty subsets $J\subseteq \hat n$ with $\dim \Delta_J=|J|-1$.
The most refined decomposition of $\Delta$ we will ever need is the $\dsd \Delta$ whose faces  $\Delta_{IJ}$ are the pairs $I\subseteq J\subseteq \hat n$. The face lattice is given by inclusion on the $J$'s and the reverse inclusion on the $I$'s.

On the torus side the set of the hyperplanes 
$$\theta_i=\theta_j,\ i,j\in \hat n,
$$ 
stratifies $\T^n$ by cyclic orderings of the points $\theta_0, \dots, \theta_n$ on the circle. The strata $\T_\sigma$ are labeled by {\bf cyclic partitions} $\sigma=\< I_1, \dots, I_k\>$ of the set $\hat n$, that is, $\hat n= I_1 \sqcup \dots \sqcup I_k$ and the sets $I_1, \dots, I_k$, called the {\bf parts} of $\sigma$, are cyclically ordered. The elements within each part $I_i$ are not ordered. If all parts are 1-element sets then we will call this partition {\bf maximal} and write $\sigma=\<i_0, \dots, i_n\>$. 
A cyclic partition $\sigma=\< I_1, \dots, I_k\>$ can be depicted by marking $|\sigma|:=k$ points on a circle, called {\bf vertices} of $\sigma$, and labeling the arcs between them by parts in $\sigma$ in the counter clockwise order. Any coarsening of $\sigma$ is specified by a subset of vertices of $\sigma$.

Each $\T_\sigma\subset \T^n$ can be thought of as the interior of the simplex 
\begin{equation}\label{eq:lift}
\Delta_\sigma:=\left\{(\alpha_1, \dots, \alpha_k) \in \R^{k} \suchthat \alpha_i\ge 0, \ \sum \alpha_i =2\pi \right\}
\end{equation}
Here the coordinates $\alpha_i$ play the r\^ole of differences between the consecutive in the order $\sigma$ original arguments $\theta$'s. More precisely $\T_\sigma$ embeds into $\T^n$ via $\theta_i=\theta_j$ for $i,j\in I_s$ and $\theta_i+\alpha_s=\theta_j$ for $i\in I_s ,j\in I_{s+1}$, where we assumes the periodic indexing $I_{s+k}=I_s$.

The closure $\overline \T_\sigma$ of each $\T_\sigma$ in $\T^n$ lifts to the universal cover of $\T^n$ as a simplex $\Delta_\sigma$ in the cube  $[0,2\pi]^n$. The covering  map $\Delta_\sigma \to \overline \T_\sigma$ is one-to-one away from the vertices of $\Delta_\sigma$ which are all mapped to $\{0\}$, the single vertex of $\overline \T_\sigma$. 
We can then distinguish the preimages of $\{0\}$ in  $\Delta_\sigma$ by specifying a vertex from the set of vertices of $\sigma$.

There is a (twice)$^n$ more refined {\bf alcove} decomposition $\am\, \T^n$ of $\T^n$ by the hyperplanes 
\begin{equation}\label{eq:alcove}
\theta_i - \theta_j \in \pi \Z, \text{ for all pairs } i,j \in \hat n.
\end{equation}
One can parametrize the alcoves $\am_\nu$ by non-empty {\bf nets of chords} $\nu$ as follows, see \cite{KZ} for details. Draw a circle in the plane, which we assume to be (counter-clockwise) oriented. A {\bf chord} is an interval (possibly an infinitesimal tangent) in the circle connecting two points, to be vertices of the partition $[\nu]$. We say that a non-empty collection $\nu$ of chords in the disk with at most $n+1$ vertices is a {\bf net} if any two chords intersect (possibly at a vertex). Then we label the arcs between the vertices by disjoint subsets $I_s$ such that $\bar n=I_1\sqcup \dots \sqcup I_k$, they form a cyclic partition $[\nu]=\<I_1, \dots, I_k\>$.

A net of chords $\nu$ defines an alcove $\am_\nu$ by defining the relations among the arguments $\theta_i$ as follows.
If two elements  $i,j\in \hat n$ are separated by 
\begin{enumerate}
\item no chords in $\tau$, that is $i,j$ belongs to the same $I_s$ in $\sigma(\tau)$, then $\theta_i=\theta_j$;
\item all chords in $\nu$, then $\theta_i - \theta_j = \pi \mod 2\pi$;
\item some but not all chords in $\nu$, then all non-separating chords define the same counter clockwise order (otherwise, they would not intersect), say $i$ comes before $j$, then $\theta_j -\theta_i \in [0,\pi] \mod 2\pi$. 
\end{enumerate}
%
We will denote the alcove decomposition of any simplex $\Delta_\sigma$ by $\am\, \Delta_\sigma$.

Finally we combine the stratification of $\Delta$ by $\Delta_J$ with the stratification of $\T^n$ by either $\T_\sigma$ or $\am_\nu$ to get the product stratification of $\Delta\times \T^n$. 
We illustrate cells $\Delta_J\times \T_\sigma$ and $\Delta_J\times \am_\nu$ in $\Delta\times \T^n$ by marking the arcs between vertices of $\sigma$, respectively drawing the net~$\nu$, and {\underline {underlining}} the elements in ${ J}\subseteq \bar n$, see Fig.~\ref{fig:anynet}.
\begin{figure}[h]
\centering
\begin{tikzpicture}[scale=.5]
\draw (-8,0) circle [radius=2];
\draw [fill] (-10,0) circle [radius=.1];
\draw [fill] (-9,1.732) circle [radius=.1];
\draw [fill] (-7,1.732) circle [radius=.1];
\draw [fill] (-6,0) circle [radius=.1];
\draw [fill] (-7,-1.732) circle [radius=.1];
\draw (-9,-1.732) circle [radius=.1];

\node [right] at (-6.268,-1) {$\underline 0$};
\node [below] at (-8,-2) {$\underline 5$};
\node [left] at (-9.732,-1) {$4$};
\node [left] at (-9.732,1) {$\underline 3$};
\node [above] at (-8,2) {$2$};
\node [right] at (-6.268,1) {$1$};

\draw (0,0) circle [radius=2];
\draw [fill] (2,0) circle [radius=.1];
\draw [fill] (1,1.732) circle [radius=.1];
\draw [fill] (-1,1.732) circle [radius=.1];
\draw [fill] (-2,0) circle [radius=.1];
\draw (-1,-1.732) circle [radius=.1];
\draw [fill] (1,-1.732) circle [radius=.1];

\node [left] at (-1.732,-1) {$4$};
\node [below] at (0,-2) {$\underline 5$};
\node [right] at (1.732,-1) {$ \underline 0$};
\node [right] at (1.732,1) {$1$};
\node [above] at (0,2) {$2$};
\node [left] at (-1.732,1) {$ \underline 3$};

\draw [thick, blue] (-2,0)--(2,0);
\draw [thick, blue] (-2,0)--(1,1.732);
\draw [thick, blue] (-2,0)--(-1,1.732);
\draw [thick, blue] (-1,1.732)--(1,-1.732);

\draw (8,0) circle [radius=2];
\draw [fill] (10,0) circle [radius=.1];
\draw [fill] (9,1.732) circle [radius=.1];
\draw [fill] (7,1.732) circle [radius=.1];
\draw [fill] (6,0) circle [radius=.1];
\draw (7,-1.732) circle [radius=.1];
\draw [fill] (9,-1.732) circle [radius=.1];

\node [left] at (6.268,-1) {$4$};
\node [below] at (8,-2) {$ \underline 5$};
\node [right] at (9.732,-1) {$ \underline 0$};
\node [right] at (9.732,1) {$1$};
\node [above] at (8,2) {$2$};
\node [left] at (6.268,1) {$ \underline 3$};

\draw [thick, blue] (6,0)--(10,0);
\draw [thick, blue] (6,0)--(9,1.732);
\draw [thick, blue] (6,0)--(7,1.732);
\draw [thick, blue] (6,0)--(9,-1.732);
\draw (5.2,0) [thick, blue] ellipse (.8 and .4);

\end{tikzpicture}
\caption{Example: $J=\{0,3,5\}$, $\sigma=\<1,2,3,\{45\},0\>$ and two nets of chords $\nu_{1,2}$ with $[\nu_{1,2}]=\sigma$. The bold points on the circle are vertices. Tangent chord on the right is depicted as an outside loop.} 
\label{fig:anynet}
\end{figure}
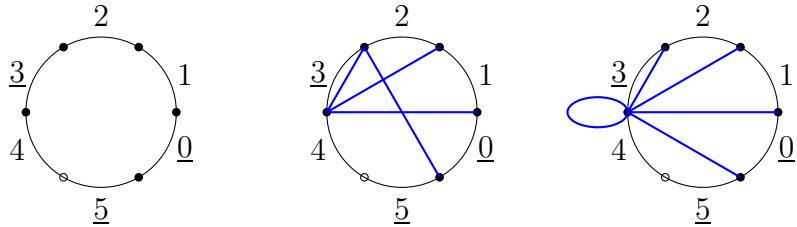

\subsection{A cell decomposition of the complex pair of pants} 
The $(n-1)$-dimensional {\bf pair-of-pants} $P^\circ$ is the complement of $n+1$ generic hyperplanes in $\C\PP^{n-1}$. 
By an appropriate choice of coordinates we can identify $P^\circ$ with the affine hypersurface in $(\C^*)^{n+1}/\C^*$ given in homogeneous coordinates by 
$$z_0+z_1+\dots + z_n=0.
$$ 
We define the {\bf compactified pair-of-pants} $P$ to be the closure of $P^\circ$ in $\Delta \times \TT^n$ via the map 
$$\mu_1\times \mu_2: (\C^*)^{n+1}/\C^* \to \Delta \times \TT^n , \quad 
[z_0,\dots ,z_n] \mapsto \left(\frac{|z_0|}{\sum |z_i |},\dots, \frac{|z_n|}{\sum |z_i |}; [\arg z_0, \dots, \arg z_n] \right).
$$
$P$ is a manifold with boundary, and it can be thought of as a real oriented blow-up of $\C\PP^{n-1}$ along its intersections with the coordinate hyperplanes in $\C\PP^n$.

The image $\mu_1(P)\subset \Delta$ is called the (compactified) {\bf amoeba} of the hypersurface $P$, see \cite{GKZ}. One can easily identify $\mu_1(P)$ with the hypersimplex $\Delta(2)$, since the only restrictions on lengths of the $z_i$ are given by the triangle inequalities. That is, if we normalize the perimeter to be 1, then $\mu_1(P)$ is cut out by the inequalities $|z_i|\le1/2$.

The image $\mu_2(P)\subset \T^n$ is called the {\bf coamoeba} of the hypersurface $P$. We denote its restriction to the stratum $\overline \T_\sigma$ by $\cc_\sigma$. In fact, since the covering map $\Delta_\sigma\rightarrow\overline{\T^n_{\sigma}}$ is bijective away from the vertices of $\Delta_\sigma$ and $\cc_\sigma$ misses $\{0\}\in \T^n$, we may view $\cc_\sigma$ as a subset of the simplex $\Delta_\sigma$. Interestingly, the coamoeba cell $\cc_\sigma\subset \Delta_\sigma$ is cut out by the inequalities $\alpha_s\leq \pi, \ \sum \alpha_s=2\pi$, that is $\cc_\sigma$ is also the  hypersimplex $\Delta_\sigma (2)\subset \Delta_\sigma$.

We will also consider {\bf partial} coamoebas $\cc^I\subset \T^n$ for $I\subset \hat n$ defined as the closure of the image $\mu_2(P^I)\subset \T^n$, where $P^I\subset (\C^*)^{n+1}/\C^*$ is a hypersurface given by $\sum_{i\in I} z_i=0$. Any partial coamoeba is the product of a lower dimensional coamoeba with a complementary dimensional torus.

The product stratification $(J, \sigma)$ of $\Delta\times \T^n$ induces the subdivision of $P$ which was shown in \cite{KZ} to be a regular CW-complex. Let us briefly describe the face lattice  $\{P_{\sigma,J}\}$ of this complex.
We say that $\sigma$ {\bf divides} $J$ if $J$ contains elements in at least two parts of $\sigma=\<I_1, \dots, I_{k}\>$. $P_{\sigma,J}$ is non-empty if and only if $\sigma$ divides $J$, then the dimension of $P_{\sigma,J}$ is $|\sigma| + |J| -4$. This, in particular, means that $|\sigma|\ge 2$ and $|J|\ge 2$.  $P_{\sigma',J'}$ is a face of $P_{\sigma,J}$ if $J'\subseteq J$ and $\sigma'$ is a coarsening of $\sigma$.

Again since the covering map $\Delta_\sigma\rightarrow\overline{\T^n_{\sigma}}$ is bijective away from $\{0\}\in \T^n$ and $P_{\sigma,J}$ does not 
have any points lying over $\{0\}$, we may view it sitting in the product of simplices
$$P_{\sigma,J} \subseteq \Delta_J \times \Delta_\sigma.$$
It was shown in \cite{RZ} that $(\Delta_J \times \Delta_\sigma, P_{\sigma,J})$ is homeomorphic to the standard ball pair for every $(J,\sigma)$. Our main goal is to prove the same result for the phase tropical case.

\subsection{Phase tropical pair-of-pants as a CW complex} 
Consider the {\bf spine} $H$ of the amoeba $\mu_1(P)$ (also known as the tropical hyperplane) which is a polyhedral subcomplex of $\dsd\Delta$ defined as 
$$H=\{(x_0, \dots ,x_n) \in \Delta \suchthat x_i=x_j\ge x_k \text{ for some } i\ne j \text{ and all } k\ne i,j\}.
$$ 
Its faces $H_{IJ}$ are cubes of dimension $|J|-|I|$ parameterized by pairs of subsets $I\subseteq J\subseteq \hat n$ with $|I|\ge 2$. Namely, $H_{IJ}$ is defined by $x_i=x_{i'} \ge x_k$ for all $i,i' \in I , k\not\in I$ and $x_j=0, j\not\in J$.

The {\bf phase tropical pair-of-pants} $\pp \subset \Delta \times \TT^n$ is the union
$$\pp: = \bigcup_{I\subseteq J} (H_{IJ} \times \cc^I).
$$
Similar to the complex pair-of-pants $P$ the stratification $\{\Delta_J\times \TT^n_\sigma\}$ of $\Delta\times \TT^n$ induces a stratification $\{\pp_{\sigma, J}\}$ of $\pp$. It was shown in \cite{KZ} that $\{\pp_{\sigma, J}\}$ form a regular CW complex isomorphic to $\{P_{\sigma, J}\}$ which proves that $P$ and $\pp$ are homeomorphic.

Again, since each $\pp_{\sigma, J}$ does not touch the vertices of $\Delta_\sigma$ we will view it as sitting inside the product of two simplices 
$$\pp_{\sigma, J}\subset \Delta_J \times \Delta_\sigma.
$$
Our main goal will be to show that this is the standard ball pair.

Let us briefly recall the polyhedral structure of $\pp_{\sigma,J}$ in this complex, see \cite{KZ} for details. 
For a subset $I\subseteq \hat n$ we say a chord in $\nu$ {\bf divides} $I$ if $I$ does not lie on one side of it. Then $\pp_{\sigma,J}$ is the union of products $H_{IK}\times \am_\nu$, such that $I\subseteq K\subseteq J$, $[\nu]$ is a coarsening of $\sigma$  and each chord in $\nu$ {\bf divides}~$I$. 
\begin{figure}[h]
\centering
\begin{tikzpicture}[scale=.5]
\draw (0,0) circle [radius=2];
\draw [fill] (2,0) circle [radius=.1];
\draw [fill] (1,1.732) circle [radius=.1];
\draw [fill] (-1,1.732) circle [radius=.1];
\draw [fill] (-2,0) circle [radius=.1];
\draw [fill] (-1,-1.732) circle [radius=.1];
\draw [fill] (1,-1.732) circle [radius=.1];

\node [left] at (-1.732,-1) {$\underline 4$};
\node [below] at (0,-2) {$\underline 5$};
\node [right] at (1.732,-1) {$\bf \underline 0$};
\node [right] at (1.732,1) {$\underline 1$};
\node [above] at (0,2) {$\underline 2$};
\node [left] at (-1.732,1) {$\bf \underline 3$};

\draw [thick, blue] (-2,0)--(2,0);
\draw [thick, blue] (-2,0)--(1,1.732);
\draw [thick, blue] (-2,0)--(-1,1.732);
\draw [thick, blue] (-1,1.732)--(1,-1.732);
\end{tikzpicture}
\caption{A face $H_{IJ}\times \am_\nu$ in $\pp_{\sigma,\hat n}$ for $\sigma=\{0,1,2,3,45\}$ and ${I}=\{0,3\}$ in {\bf bold}.} 
\label{fig:net}
\end{figure} 

The face lattice is $(I,K,\nu)\preceq (I',K',\nu')$ if $I\supseteq I'$, $K\subseteq K'$ and $\nu\subseteq \nu'$. The dimension of the $(I,K,\nu)$-stratum is given by 
\begin{equation}\label{eq:dimension}
\dim H_{IK} + \dim \am_\nu = (|K|-|I|) + (|\nu|-1).
\end{equation}

The final remark we would like to make before going into the proof section is that the alcove subdivision is not the coarsest polyhedral structure one can put on $\pp$ but it is the coarsest one which refines the $\sigma$-stratification of the $\T^n$-factor.

\section{An isotopy}\label{section3}
It was shown in \cite{KZ} that $\pp$ is a topological manifold homeomorphic to the complex pairs-of-pants. We will prove the relative (much stronger) version of this homeomorphism, which is the main result of the paper:
\begin{theorem}\label{thm:main}
The two spaces $P$ and $\pp$ are (ambient) isotopic in $\Delta\times \TT^n$. An isotopy can be chosen such that it respects the stratification $\{\Delta_J\times \TT^n_\sigma\}$.
\end{theorem}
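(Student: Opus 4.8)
The plan is to reduce the theorem to one local statement, checked on each stratum, and then to glue the local homeomorphisms using only elementary PL topology. Recall from \cite{KZ} that the stratifications $\{P_{\sigma,J}\}$ and $\{\pp_{\sigma,J}\}$ are isomorphic regular CW complexes, sharing the face poset of admissible pairs $(\sigma,J)$ (those with $\sigma$ dividing $J$), and that for each such pair $P_{\sigma,J}$ and $\pp_{\sigma,J}$ lie in the ball $\Delta_J\times\Delta_\sigma$ with the same dimension $|\sigma|+|J|-4$, hence with codimension exactly two. Since \cite{RZ} already provides that $(\Delta_J\times\Delta_\sigma,\,P_{\sigma,J})$ is the standard (unknotted) ball pair for every admissible $(\sigma,J)$, the essential new content of the theorem is the single claim that $(\Delta_J\times\Delta_\sigma,\,\pp_{\sigma,J})$ is \emph{also} the standard ball pair for every admissible $(\sigma,J)$ --- the phase-tropical counterpart of the main ball-pair result of \cite{RZ}.

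I would prove this claim by induction on $|\sigma|+|J|$, in fact establishing the stronger statement that there is a PL homeomorphism from $(\Delta_J\times\Delta_\sigma,\,\pp_{\sigma,J})$ to the standard codimension-two ball pair which restricts on each proper face to the corresponding homeomorphism of the face pair. Working from the polyhedral description $\pp_{\sigma,J}=\bigcup(H_{IK}\times\am_\nu)$ recalled above, whose faces are the lower pairs $(\Delta_{J'}\times\Delta_{\sigma'},\,\pp_{\sigma',J'})$ with $|\sigma'|+|J'|<|\sigma|+|J|$, the inductive step first glues the homeomorphisms of the faces (compatible by induction) to a homeomorphism $\phi_\partial$ of $\partial(\Delta_J\times\Delta_\sigma)$ carrying $\partial\pp_{\sigma,J}$ to the boundary of the standard model, and then extends $\phi_\partial$ over the interior by coning. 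Since the standard model is itself a cone pair, the extension exists as soon as $(\Delta_J\times\Delta_\sigma,\,\pp_{\sigma,J})$ is PL a cone on its boundary pair --- which one would get by verifying, from the $\bigcup(H_{IK}\times\am_\nu)$ description, that $\pp_{\sigma,J}$ is star-shaped about a suitable interior basepoint $c$ (the convex ball $\Delta_J\times\Delta_\sigma$ being star-shaped about $c$ automatically). The base cases, with $|\sigma|+|J|$ minimal and the pair a point in a $2$-ball, are immediate.

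Granting the claim, the ambient isotopy is built by induction over the skeleton of the product stratification $\{\Delta_J\times\TT^n_\sigma\}$ of $\Delta\times\TT^n$. Suppose a stratified ambient isotopy already carries $P$ onto $\pp$ over every closed stratum of dimension $<d$. To extend it over a $d$-dimensional closed stratum $\overline{\Delta_J\times\TT^n_\sigma}$, lift to the ball $\Delta_J\times\Delta_\sigma$ --- legitimate because neither $P_{\sigma,J}$ nor $\pp_{\sigma,J}$ meets the collapsed vertex locus $\Delta_J\times\{0\}$, and all the maps below are the identity near the frontier of the ball, hence descend. Inside this ball the current position $A_1$ of $P$ and the subspace $A_2=\pp_{\sigma,J}$ are both standard ball pairs (the first because ``standard'' is a PL-invariant of the pair, the second by the claim) and already agree on the boundary. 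Then a PL homeomorphism of $(\Delta_J\times\Delta_\sigma,A_1)$ onto $(\Delta_J\times\Delta_\sigma,A_2)$ fixing the boundary sphere pair exists --- this is the standard fact that a homeomorphism of the boundary sphere pair of a standard ball pair extends over the ball pair --- and, being a homeomorphism of a ball rel its boundary, it is isotopic to the identity rel boundary by the Alexander trick. This yields an ambient isotopy supported in the closed stratum and fixing its frontier, which extends by the identity to all of $\Delta\times\TT^n$ and is automatically stratification-preserving. After finitely many steps one reaches the top stratum and obtains the desired isotopy.

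The hard part will be the unknottedness assertion of the second paragraph. The combinatorics --- the isomorphism of the two face posets, the dimension count, the matching of the two stratified pairs over lower strata --- is already available from \cite{KZ}, and the assembly in the third paragraph is routine; but because the embeddings $\pp_{\sigma,J}\subset\Delta_J\times\Delta_\sigma$ are of codimension two, knotting has to be ruled out by hand, and for that one must exploit the specific geometry of the phase-tropical model --- the way the spine cubes $H_{IK}$ interlock with the alcove/partial-coamoeba factors $\am_\nu$ --- rather than any formal property of CW complexes. Establishing the requisite transverse structure (star-shapedness about a good basepoint, or whatever explicit local model takes its place) uniformly in $(\sigma,J)$ and compatibly with face inclusions is where I expect the real effort to go.
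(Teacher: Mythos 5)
Your overall scaffolding --- reduce the theorem to the single claim that each $(\Delta_J\times\Delta_\sigma,\,\pp_{\sigma,J})$ is an unknotted codimension-two ball pair, then assemble an ambient isotopy by induction over the stratification using Alexander's trick and extension-over-unknotted-balls --- is exactly the structure of the paper's proof of Theorem \ref{thm:main} (it cites Propositions \ref{prop:RZ}, \ref{prop:pt} and then invokes Proposition \ref{prop:unknotted}(3) inductively). So paragraphs one and three of your proposal match the paper and are sound. You also correctly flag that all the real work sits in the unknottedness claim of paragraph two.

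That is, however, precisely where the proposal has a genuine gap, and not merely in ``execution.'' Your suggested mechanism --- $\pp_{\sigma,J}$ is star-shaped about an interior basepoint $c$, so the pair is a cone on its boundary sphere pair and you can extend $\phi_\partial$ by coning --- is very unlikely to hold. $\pp_{\sigma,J}$ is a union of products $H_{IK}\times\am_\nu$ of cube faces with alcoves, and already in the smallest nontrivial case ($|J|=|\sigma|=3$, pictured in Fig.~\ref{fig:ober}--\ref{fig:isotopy}) it is a $2$-complex of six alcove-scaled squares that is visibly \emph{not} the cone on its boundary circle; that is precisely why the paper has to perform the explicit ``raising the roof'' cellular moves to carry $\pp_{\sigma,J}$ to the ober-tropical model $\mathcal{O}\pp_{\sigma,J}$, which \emph{is} a cone on its boundary, before it can conclude anything in dimension four. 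In general the phase-tropical cells lack a cone structure, and your induction therefore cannot close. The paper instead proves unknottedness (Proposition \ref{prop:pt}) via Zeeman's codimension-two unknotting criterion (Proposition \ref{prop:unknotted}(1)): it establishes local flatness by an explicit ``poking'' vector $w+\lambda$ transverse to both facet types (Lemma \ref{lemma:flat}), and shows the complement $\Delta_J\times\Delta_\sigma\setminus\pp_{\sigma,J}$ deformation retracts to a circle by collapsing it onto the non-interlacing subcomplex $L_{\sigma,J}$ borrowed from \cite{RZ} (Lemma \ref{lemma:complement}); the excluded ambient dimension four is treated separately by the explicit isotopy to $\mathcal{O}\pp_{\sigma,J}$. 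If you want your approach to go through, you would have to replace the star-shapedness hypothesis with this local-flatness-plus-complement-homotopy argument (or something equivalent), since a cone structure simply isn't there.
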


The key ingredient for the proof of Theorem \ref{thm:main} is a compatible collection of homeomorphisms between the cell pairs $(\Delta_J\times\Delta_\sigma, P_{\sigma, J})$ and $(\Delta_J \times\Delta_\sigma, \pp_{\sigma, J})$. More precisely, we will show that both are unknotted ball pairs.

\subsection{Unknotted ball pairs} Here we collect some basic results from PL topology which we will need to prove the isotopy. A ball pair $(B^q, B^{m})$ is {\bf proper} if $\partial B^{m} =B^{m} \cap \partial B^q$.
The {\bf standard ball pair} $B^{q, m}$ is the pair of cubes $([-1,1]^q, \{0,0\}\times[-1,1]^{m})$. A ball pair is {\bf locally flat} if any point has a neighborhood homeomorphic to a neighborhood of a point in the standard pair. We say that a ball pair is {\bf unknotted} if it is homeomorphic to the standard ball pair. We will be mainly concerned with ball pairs of codimension 2. 

\begin{proposition}[See, e.g. \cite{RS}, Chapters 4 and 7] \label{prop:unknotted} 
In the PL category
\begin{enumerate}
\item
For $q\ne 4$ a locally flat ball pair $(B^{q}, B^{q-2})$ is unknotted if and only if $B^{q}\setminus B^{q-2}$ has the homotopy type of a circle.

\item A proper ball pair $(B^4,B^2)$ is unknotted if it is the cone over the locally flat sphere pair $(S^3,S^1)$ with $S^3 \setminus S^1$ homotopic to a circle.

\item 
A homeomorphism between the boundaries of unknotted balls extends to their interior. Moreover one can choose the extension to agree with any given extension on the subball.
\end{enumerate}
\end{proposition}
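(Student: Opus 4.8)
\noindent\textit{Proof proposal.} All three parts are standard facts of PL topology, carried out in \cite{RS}; I sketch the shape of the arguments and flag the one genuinely hard point.

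\medskip
\noindent\emph{Part (1).} The ``only if'' direction is immediate: in the standard model $B^{q,q-2}=([-1,1]^q,\{0,0\}\times[-1,1]^{q-2})$ the complement deformation retracts onto a circle, first by collapsing the last $q-2$ coordinates and then by retracting the punctured square $[-1,1]^2\setminus\{0,0\}$ onto $\partial[-1,1]^2$. For the converse I would work in PL throughout. A locally flat ball pair is proper, and the locally flatly embedded sub-ball $B^{q-2}$ has a regular neighbourhood $N$ in $B^q$ which, by local flatness and codimension two, is a $D^2$-block bundle over $B^{q-2}$; as $B^{q-2}$ collapses to a point this block bundle is trivial, so $(N,B^{q-2})\cong(B^{q-2}\times D^2,\ B^{q-2}\times 0)$ is already the standard pair. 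Put $C:=\overline{B^q\setminus N}$, a compact PL $q$-manifold whose boundary is the frontier $C_0=B^{q-2}\times S^1$ of $N$ together with $C_1=\overline{\partial B^q\setminus(\partial B^{q-2}\times D^2)}$, glued along the corner $\partial B^{q-2}\times S^1$. Since $C\simeq B^q\setminus B^{q-2}\simeq S^1$, since the meridian $\{\ast\}\times S^1\subset C_0$ generates $\pi_1(B^q\setminus B^{q-2})\cong\Z$ by Alexander duality, and by Lefschetz duality for the triad, $(C;C_0,C_1)$ is an $h$-cobordism rel its corner with fundamental group $\Z$; since $\operatorname{Wh}(\Z)=0$ it is an $s$-cobordism, hence a product $C\cong B^{q-2}\times S^1\times[0,1]$ by the PL $s$-cobordism theorem. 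Regluing $B^q=(B^{q-2}\times D^2)\cup_{B^{q-2}\times S^1}(B^{q-2}\times S^1\times[0,1])$ then exhibits $(B^q,B^{q-2})$ as the standard pair. The cases $q\le 3$ are classical, and the assumption $q\ne 4$ enters exactly at this step because the $4$-dimensional PL $s$-cobordism theorem fails; this is the only substantial point in the proposition, and I expect it to be the main obstacle.

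\medskip
\noindent\emph{Part (2).} For $q=4$ the argument above is unavailable, so instead I would use the cone hypothesis directly. A locally flat knot $S^1\subset S^3$ whose complement has infinite cyclic fundamental group is unknotted --- classical, via Dehn's lemma and the loop theorem --- so $(S^3,S^1)$ is PL homeomorphic to the standard sphere pair $\partial B^{4,2}$. Coning is functorial, hence $(B^4,B^2)=c(S^3,S^1)\cong c(\partial B^{4,2})$, and since $B^{4,2}=c(\partial B^{4,2})$ and $(B^4,B^2)$ is proper this identifies $(B^4,B^2)$ with the standard ball pair.

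\medskip
\noindent\emph{Part (3).} Using (1) and (2), I may assume both pairs are the standard pair $B^{q,q-2}$. Given a PL homeomorphism $h$ between the boundary sphere pairs, the cone construction (Alexander trick) extends it to a PL homeomorphism $c(h)\colon B^q\to B^q$ taking sub-ball to sub-ball, which proves the first assertion. For the ``moreover'' clause, let $g\colon B^{q-2}\to B^{q-2}$ be the prescribed extension of $h|_{\partial B^{q-2}}$ and set $\phi:=g\circ\big(c(h)|_{B^{q-2}}\big)^{-1}$, a PL self-homeomorphism of $B^{q-2}$ equal to the identity on $\partial B^{q-2}$; by the Alexander trick it is PL isotopic to the identity rel $\partial B^{q-2}$. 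Spreading such an isotopy over the product neighbourhood $B^{q-2}\times D^2$ of the sub-ball --- a relative Alexander trick, level-preserving in the $D^2$ factor --- yields a PL self-homeomorphism $\Phi$ of $B^q$ that is the identity off $B^{q-2}\times D^2$ and restricts to $\phi$ on $B^{q-2}\times 0$. Then $H:=\Phi\circ c(h)$ is a PL homeomorphism of $B^q$ with $H|_{\partial B^q}=h$ and $H|_{B^{q-2}}=\phi\circ c(h)|_{B^{q-2}}=g$, as required; Parts (2) and (3) are thus formal once (1) is in hand.
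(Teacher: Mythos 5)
The paper itself offers no proof of this proposition: it is recorded as a standard fact of PL topology with a pointer to Rourke--Sanderson, Chapters 4 and 7, so there is no argument in the text against which to compare. Read on its own merits, your sketch of parts (2) and (3) is correct: (2) reduces via Papakyriakopoulos' unknotting of $S^1\subset S^3$ and the functoriality of the cone, and your handling of the ``moreover'' clause in (3) --- correcting the cone extension by a relative Alexander isotopy spread over the trivial normal block $B^{q-2}\times D^2$ --- is exactly the right manoeuvre.

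Part (1) is where the real content lies, and as sketched it has two genuine gaps. First, to conclude that $(C;C_0,C_1)$ is an $h$-cobordism you must show that \emph{both} inclusions are homotopy equivalences. You handle $C_0=B^{q-2}\times S^1\hookrightarrow C$, but you say nothing useful about $C_1=\overline{\partial B^q\setminus(\partial B^{q-2}\times D^2)}$; circularity of the interior complement $B^q\setminus B^{q-2}$ does not directly control the homotopy type or fundamental group of $C_1$, and one needs an additional duality or inductive argument on the boundary sphere pair (Alexander duality gives the homology of $C_1$, but the $\pi_1$ statement and the fact that the inclusion is a homotopy equivalence still need to be supplied). Second, and more seriously, the PL $s$-cobordism theorem you invoke requires the cobordism $C$ to have dimension at least $6$, i.e.\ $q\ge 6$. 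Your remark that ``the assumption $q\ne4$ enters exactly at this step'' therefore undersells the problem: the argument as written also fails to cover $q=5$, a case the proposition asserts and which the paper actually uses (strata with $|J|+|\sigma|=7$ appear as soon as $n$ is moderately large). The $q=5$ case requires a separate device --- this is plausibly why the paper points to Chapter 7 (engulfing) in addition to Chapter 4 --- and should not be silently folded into the $q\ge 6$ discussion.
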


\subsection{Proof of the main theorem}
The main building block for the proof of Theorem~\ref{thm:main} is a homeomorphism of the pairs $(\Delta_J\times\Delta_\sigma , P_{\sigma, J})$ and $(\Delta_J\times\Delta_\sigma , \pp_{\sigma, J})$.

\begin{proposition}[\cite{RZ}, Prop. 11 and 12]\label{prop:RZ}
The ball pair $(\Delta_J\times\Delta_\sigma , P_{\sigma, J})$ is unknotted.
\end{proposition}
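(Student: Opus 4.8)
The plan is to prove Proposition \ref{prop:RZ} by induction on $\dim(\Delta_J\times\Delta_\sigma)=(|J|-1)+(|\sigma|-1)$, using the characterization of unknotted codimension-$2$ ball pairs in Proposition \ref{prop:unknotted}. First I would verify that the pair is proper and locally flat: properness amounts to checking that $P_{\sigma,J}$ meets the boundary of $\Delta_J\times\Delta_\sigma$ exactly in its own boundary, which follows from the description of the face lattice $\{P_{\sigma',J'}\}$ (the faces of $P_{\sigma,J}$ are precisely the $P_{\sigma',J'}$ with $J'\subseteq J$ and $\sigma'$ a coarsening of $\sigma$, and these sit inside the corresponding boundary faces $\Delta_{J'}\times\Delta_{\sigma'}$). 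Local flatness away from the deepest strata will follow from the inductive hypothesis via a link/star argument, since a neighborhood of an interior point of a proper face is a cone on a lower-dimensional pair.

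The heart of the argument is the homotopy computation: I must show that $(\Delta_J\times\Delta_\sigma)\setminus P_{\sigma,J}$ has the homotopy type of a circle. Here I would use the concrete model. Recall $P_{\sigma,J}\subseteq \Delta_J\times\Delta_\sigma$ is the closure of $\mu_1\times\mu_2$ applied to the solution set of $\sum_{i} z_i=0$; the complement consists of argument/modulus data for which the vectors $z_0,\dots,z_n$ (grouped and cyclically ordered according to $\sigma$, with prescribed moduli in $\Delta_J$) do \emph{not} close up to zero, i.e. their sum is a nonzero complex number. The map sending such a configuration to the argument of that nonzero sum, $\Delta_J\times\Delta_\sigma\setminus P_{\sigma,J}\to S^1$, is the candidate homotopy equivalence. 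I would show its fibers are contractible (they are convex-like regions: fixing the direction of the total sum, the remaining data ranges over a contractible set) and that it admits a section, so that the complement deformation retracts onto a circle. One must be careful that $\sigma$ divides $J$ — this is exactly the nonemptiness condition, and it guarantees $P_{\sigma,J}$ is a genuine codimension-$2$ subcomplex rather than empty or the whole ball.

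For the induction to go through I also need that $P_{\sigma,J}$ itself is a ball of the right dimension $|\sigma|+|J|-4$; this too I would get inductively from the regular CW structure established in \cite{KZ}, checking that the boundary sphere $\partial(\Delta_J\times\Delta_\sigma)$ meets $P_{\sigma,J}$ in an unknotted sphere pair so that Proposition \ref{prop:unknotted}(1) applies (for $q\ne 4$), and handling the base case $q=4$, i.e. $(|J|,|\sigma|)=(2,4),(3,3),(4,2)$ or lower, separately by Proposition \ref{prop:unknotted}(2): here one exhibits the pair explicitly as the cone on the $(S^3,S^1)$ boundary pair, using the $S^1$-complement-is-a-circle check again on the sphere level. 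Finally, to promote the cell-by-cell homeomorphisms to a global statement, I would note the homeomorphisms can be chosen compatibly on faces by Proposition \ref{prop:unknotted}(3): proceeding up the face lattice, each homeomorphism $\partial(\Delta_J\times\Delta_\sigma)\to\partial(\Delta_J\times\Delta_\sigma)$ already defined on the boundary (and carrying $\partial P_{\sigma,J}$ to $\partial P_{\sigma,J}$) extends to the interior carrying $P_{\sigma,J}$ to $P_{\sigma,J}$, with the freedom to prescribe the extension on the subball.

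The main obstacle I anticipate is the homotopy-type computation of the complement $(\Delta_J\times\Delta_\sigma)\setminus P_{\sigma,J}$: making rigorous that the "argument of the total sum" map is a homotopy equivalence requires a careful analysis of degenerate configurations near the boundary strata (where some $z_i$ vanish or some arguments collide), and ensuring the retraction is compatible with the stratification so it can be glued. The dimension-$4$ base cases are a secondary nuisance since Proposition \ref{prop:unknotted}(1) fails there and one must instead verify the cone structure by hand.
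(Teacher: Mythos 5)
Note first that the paper does not prove this proposition; it imports it from \cite{RZ} (Prop.~11 and 12 there). Judging from the parallel argument given here for the phase-tropical pair (Lemmas \ref{lemma:flat} and \ref{lemma:complement}, and the citation of Lemma~9 of \cite{RZ} in the proof of Lemma \ref{lemma:complement}), the reference follows the same overall framework you propose — properness and local flatness, complement homotopy equivalent to a circle, then Proposition \ref{prop:unknotted}, with a separate treatment of ambient dimension four — but the homotopy computation is \emph{combinatorial}: the complement of the open star of $P_{\sigma,J}$ in the natural polyhedral refinement of $\Delta_J\times\Delta_\sigma$ is shown to be a subcomplex $L_{\sigma,J}$ indexed by non-interlacing pairs, and that subcomplex collapses to a circle. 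Your proposal replaces this with a geometric homotopy equivalence built from $\arg(\sum z_i)$, which is a genuinely different route; the combinatorial route has the advantage of reducing the question to a finite collapse, avoiding any analysis of level sets near the boundary strata.

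There is a real gap in your geometric route as stated. First, $\arg(\sum z_i)$ is not well-defined on $(\C^*)^{n+1}/\C^*$: rotating all $z_i$ simultaneously rotates the sum, so you must fix a gauge — e.g.\ take $\arg(\sum z_i)-\theta_{i_0}$ for a reference index $i_0$ — and then check the resulting map extends continuously to the boundary strata of $\Delta_J\times\Delta_\sigma$, including the vertices of $\Delta_\sigma$ where all arguments coincide and the sum has modulus $1$. Second and more seriously, the assertion that the fibers are ``convex-like regions'' and hence contractible is unsupported; the level sets of $\arg(\sum z_i)$ are not linear in the $(x,\alpha)$ coordinates, and contractibility down to the boundary faces is exactly what requires proof. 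You flag this as the main obstacle, correctly, but it is not a technical nuisance: it is the entire content of the homotopy step, and the combinatorial collapse of \cite{RZ} exists precisely to sidestep this analytic difficulty. Your treatment of the $q=4$ base case via Proposition \ref{prop:unknotted}(2) also implicitly assumes that $P_{\sigma,J}$ is a cone over its boundary in $\Delta_J\times\Delta_\sigma$, which is clear for the ober-tropical model used in the paper's proof of Proposition \ref{prop:pt} but would need a separate argument for the complex cell.
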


We will prove the same result for phase-tropical ball pair  $(\Delta_J\times\Delta_\sigma , \pp_{\sigma, J})$. First, by looking at the polyhedral structure of $\pp_{\sigma, J}$ one can easily observe that $(\Delta_J\times\Delta_\sigma , \pp_{\sigma, J})$ is a proper ball pair.

\begin{lemma}\label{lemma:flat}
The pair $(\Delta_J\times\Delta_\sigma , \pp_{\sigma, J})$ is locally flat.
\end{lemma}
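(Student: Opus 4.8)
The plan is to establish local flatness by a stratum-by-stratum argument, working our way from the open cells of the product stratification of $\Delta_J \times \Delta_\sigma$ down through the boundary strata, and at each point exhibiting a chart that matches the standard pair $B^{q,q-2}$. Near an interior point of $\pp_{\sigma,J}$ (i.e.\ a point in the relative interior of $H_{IK}\times \am_\nu$ for which the ambient stratum is also open, so $K=J$ and $[\nu]=\sigma$ with $\nu$ a maximal net), the polyhedral structure is a product of a cube $H_{IJ}$ with a maximal alcove, and the local picture of $\pp$ inside $\Delta_J\times\Delta_\sigma$ is affine-linear; such a point is manifestly flat. The content is therefore entirely at the lower strata of the stratification $\{\Delta_J \times \TT^n_\sigma\}$, where cells of $\pp_{\sigma,J}$ meet the boundary $\partial(\Delta_J\times\Delta_\sigma)$.

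The main technical device I would use is the standard one for PL manifolds: a pair $(M,N)$ is locally flat at $x$ iff the link pair $(\Link(x,M),\Link(x,N))$ is an unknotted sphere pair of the appropriate dimensions, and this reduces inductively to checking that links are spheres and, in low codimension, applying the homotopy-circle criterion of Proposition~\ref{prop:unknotted}(1)--(2). So the first step is to identify, for a point $x$ in the relative interior of a face $H_{IK}\times\am_\nu$ of $\pp_{\sigma,J}$, the link of $x$ in $\Delta_J\times\Delta_\sigma$ and the link of $x$ in $\pp_{\sigma,J}$. Because the whole construction is a union of products $H_{IK}\times\am_\nu$ glued along the face lattice $(I,K,\nu)$, the link pair splits as a join of the link coming from the cube factor $H_{IK}$ (sitting in the simplex $\Delta_J$ directions) and the link coming from the alcove factor $\am_\nu$ (sitting in the $\Delta_\sigma$ directions). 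Each of these factor-links is itself a polyhedral pair built from the combinatorics of nets dividing $I$ and of the barycentric/dualizing subdivision, and the induction hypothesis — that the lower-dimensional pairs $(\Delta_{J'}\times\Delta_{\sigma'},\pp_{\sigma',J'})$ are already known to be unknotted ball pairs (which is exactly the statement being proved in the surrounding proposition, applied in lower dimension) — lets us conclude that each factor-link is an unknotted sphere pair.

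The key step, and the one I expect to be the main obstacle, is showing that the join of two unknotted sphere pairs is again unknotted, and in particular that the resulting link pair $S^{q-1}\setminus S^{q-3}$ has the homotopy type of a circle so that Proposition~\ref{prop:unknotted}(1) applies. The join of unknotted sphere pairs is unknotted in the PL category (this is classical — it follows from $(S^a, S^{a-2}) * (S^b, S^{b-2}) \cong (S^{a+b+1}, S^{a+b-1})$, using uniqueness of regular neighborhoods / the fact that joins of standard pairs are standard), but one must be careful about the exceptional dimension $q=4$ where Proposition~\ref{prop:unknotted}(1) fails and one has to fall back on part (2), verifying that the relevant $(S^3,S^1)$ is locally flat with complement homotopy-equivalent to a circle and that $x$ is genuinely a cone point. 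Checking the homotopy-circle condition amounts to computing $\pi_1$ of the complement $\big((\Delta_J\times\Delta_\sigma)\setminus \pp_{\sigma,J}\big)$ restricted to a small neighborhood; since $\pp_{\sigma,J}$ and $P_{\sigma,J}$ have isomorphic face lattices (as recalled from \cite{KZ}) and $P_{\sigma,J}$ is already known to be unknotted by Proposition~\ref{prop:RZ}, one can hope to transport this computation combinatorially, or simply observe that $\pp$ is globally homeomorphic to $P$ (also from \cite{KZ}) so the complements are homotopy equivalent and the circle condition is inherited.

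A cleaner alternative I would also try, to avoid the join bookkeeping: since it is already known (from \cite{KZ}) that the CW complexes $\{\pp_{\sigma,J}\}$ and $\{P_{\sigma,J}\}$ are isomorphic and that $\pp$ is a topological manifold homeomorphic to $P$, and since $P$ is locally flat in $\Delta\times\TT^n$ (it is, after all, a real oriented blow-up of a smooth projective hypersurface), one can attempt to deduce local flatness of $\pp_{\sigma,J}\subset\Delta_J\times\Delta_\sigma$ directly from that of $P_{\sigma,J}$ via the cellular homeomorphism, checking only that this homeomorphism can be taken to respect the product-of-simplices coordinates well enough that flatness is preserved. In either route, the genuinely new work is local and combinatorial: verify that at each stratum the normal data is two-dimensional with complement a circle, handle the codimension-$0$, $1$, and $2$ parts of the boundary separately, and treat the $q=4$ cone case by hand. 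I would organize the write-up as an induction on $|J|+|\sigma|$, the base case being the low-dimensional pairs where flatness is immediate.
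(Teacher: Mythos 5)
Your plan hinges on two claims, and both are problematic.

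\textbf{The join decomposition of the link pair fails.} You assert that, for a point $x$ in the relative interior of a face $H_{IK}\times\am_\nu$, the link pair $\bigl(\Link(x,\Delta_J\times\Delta_\sigma),\Link(x,\pp_{\sigma,J})\bigr)$ splits as a join of a ``cube-factor'' link with an ``alcove-factor'' link. The ambient link is indeed a join, because $\Delta_J\times\Delta_\sigma$ is a product. But the link of $\pp_{\sigma,J}$ is not: the faces $H_{I'K'}\times\am_{\nu'}$ of $\pp_{\sigma,J}$ incident to the given one are governed by the constraint that \emph{every chord in $\nu'$ divides $I'$}, which couples the $\Delta_J$-index $I'$ to the $\Delta_\sigma$-index $\nu'$. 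A small example: with $J=\hat n=\{0,1,2,3\}$, $I=\{0,1,2\}$, and $\nu$ the single chord separating $\{0\}$ from $\{1,2,3\}$, the coface with $I'=\{0,1\}$ and the same $\nu$ is in $\pp_{\sigma,J}$, but the one with $I'=\{1,2\}$ is not ($\nu$ does not divide $\{1,2\}$). So the set of admissible $(I',K')$ depends on $\nu'$, and the poset of incident faces is not a product. Consequently the link of $\pp_{\sigma,J}$ at $x$ is not a join of two sphere pairs, and the ``join of unknotted pairs is unknotted'' step has nothing to apply to. Without this, your inductive scheme has no engine.

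\textbf{The ``cleaner alternative'' is circular.} The cellular homeomorphism $P_{\sigma,J}\to\pp_{\sigma,J}$ from~\cite{KZ} is a homeomorphism of the subcomplexes, not of the pairs in $\Delta_J\times\Delta_\sigma$. Local flatness is an invariant of the embedded pair, not of the abstract subspace, so it cannot be transported along a homeomorphism of subcomplexes alone. Your hedge --- ``checking only that this homeomorphism can be taken to respect the product-of-simplices coordinates well enough that flatness is preserved'' --- is precisely asking for an ambient homeomorphism of pairs, which is the main theorem of the paper. Assuming it to prove Lemma~\ref{lemma:flat}, which feeds Proposition~\ref{prop:pt}, which feeds Theorem~\ref{thm:main}, is a loop.

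For the record, the paper's proof is structured quite differently: it reduces to the local fan at a vertex $\hat\Delta_J\times v$, observes that this fan is contained in a codimension-$1$ sub-fan $H_{v,J}\times\Delta_\sigma$ (where $H_{v,J}$ is the product of normal fans of $\Delta_{J_\pm}$), and then exhibits an explicit transversal direction $w+\lambda$ --- $w$ along the edge of $\Delta_\sigma$ through $v$ and $\lambda$ a suitable linear functional on the cube factor --- which ``pokes through'' $\pp^v_{\sigma,J}$ monotonically across every facet (two combinatorial types are checked). Projecting along $w+\lambda$ gives the desired product neighborhood directly. This is a codimension-$1$ Sch\"onflies-type argument done by hand, and it deliberately sidesteps both the induction and the dimension-$4$ pathology you would otherwise have to treat separately.
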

\begin{proof}
Since $\pp_{\sigma, J}$ is a polyhedral subcomplex of $\dsd\Delta_J\times \am \,\Delta_\sigma$ it is enough to consider the local fan at a vertex of $\pp_{\sigma, J}$. Let $v$ be a vertex of $\cc_\sigma$ which corresponds to a 2-partition $\<I_-,I_+\>$, a coarsening of $\sigma$, and let $K$ be a subset of $J$ with non-empty $K\cap I_{\pm}$. To avoid cumbersome notations we assume $K=J$ and $\sigma$ is maximal, we let $J_{\pm}:=J\cap I_{\pm}$. Any non-maximal case is the product of a lower-dimensional maximal one with the simplicial cone on $J\setminus K$.

The local fan $\pp^v_{\sigma, J}$ of $\pp_{\sigma, J}$ at the vertex $\hat \Delta_J \times v$ projects to the $\Delta_J$ factor as a locally flat codimension 1 fan $H_{v,J}\cong \R^{|J|-2}$ isomorphic to the product of the normal fans to simplices $\Delta_{J_\pm}$.
Thus the problem is reduced to a codimension 1 ball pair $(H_{v,J}\times\Delta_\sigma, \pp^v_{\sigma, J})$ which is related to the Sch\"onflies conjecture. We will avoid the inductive dependence on dimension 4 and show the local flatness of $(H_{v,J}\times\Delta_\sigma, \pp^v_{\sigma, J})$ explicitly.

We will draw the circle with arcs labelled by the parts in $\sigma$ such that $J_-$ is on the bottom and $J_+$ is on top from the horizontal chord $v$ and order the parts in $\sigma$ from right to left, see Fig. \ref{fig:positive}. The codimension 1 subball $\pp^v_{\sigma, J}$ breaks the ambient ball $H_{v,J}\times\Delta_\sigma$ into two connected components, positive and negative. A cell $H_I \times \am_\nu$ does not belong to $\pp^v_{\sigma, J}$ if there are chords in $\nu$ not dividing $I$. Since all chords intersect $I$ lies on same side for all non-dividing chords in the above right-to-left order. If $I$ is on the left then we call $H_I \times \am_\nu$ positive, and otherwise we call it negative. E.g., in Fig. \ref{fig:positive} with $\nu$ including the 4 chords the cell $H_{03}\times \am_\nu$ is positive and the cell $H_{02}\times \am_\nu$ is negative (only one chord is non-dividing in both cases).

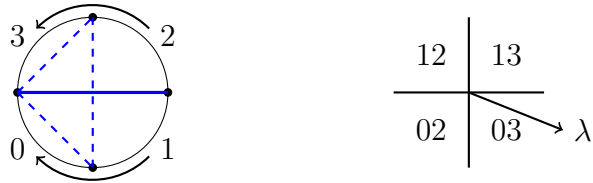
\begin{figure}[h]
\centering
\begin{tikzpicture}[scale=.5]
\draw (0,0) circle [radius=2];
\draw [fill] (2,0) circle [radius=.1];
\draw [fill] (-2,0) circle [radius=.1];
\draw [fill] (0,2) circle [radius=.1];
\draw [fill] (0,-2) circle [radius=.1];

\node [left] at (-1.5,-1.5) {$0$};
\node [right] at (1.5,-1.5) {$1$};
\node [right] at (1.5,1.5) {$2$};
\node [left] at (-1.5,1.5) {$3$};

\draw [->, thick ] (1.5,-1.7) to [out=225 ,in=-45] (-1.5,-1.7);
\draw [->, thick ] (1.5,1.7) to [out=-225 ,in=45] (-1.5,1.7);

\draw [very thick, blue] (-2,0)--(2,0);
\draw [dashed, thick, blue] (-2,0)--(0,2);
\draw [dashed, thick, blue] (-2,0)--(0,-2);
\draw [dashed, thick, blue] (0, -2)--(0,2);

\draw [thick] (8,0)--(12,0);
\draw [thick] (10,-2)--(10,2);
\node[right] at (12.5, -1) {$\lambda$};
\draw [->, thick] (10,0)--(12.5,-1);
\node at (11, -1) {$03$};
\node at (11, 1) {$13$};
\node at (9, 1) {$12$};
\node at (9, -1) {$02$};

\end{tikzpicture}
\caption{$I_-\{0,1\}$, $I_+\{2,3\}$ and a vector $\lambda$ in $\R^{|J|-2}$.} 
\label{fig:positive}
\end{figure}

Let $w$ be a vector parallel to the edge of the simplex $\Delta_\sigma$ containing $v$. Here we will have to make a choice of the direction of $w$, which is equivalent to choosing which part is positive in the 2-partition $\<I_-,I_+\>$. Then we choose a vector $\lambda$ in the product of the normal fans to simplices $\Delta_{J_\pm}$ which as a linear functional defines our right-to-left order on vertices of each simplex $\Delta_{J_\pm}$. Then we claim that $w+\lambda$ defines the desired product structure on $(H_{v,J}\times\Delta_\sigma, \pp^v_{\sigma, J})$. Geometrically it means that $w+\lambda$ ``pokes'' through $\pp^v_{\sigma, J}$ from the negative side to the positive side of $H_{v,J}\times\Delta_\sigma$, see Fig. \ref{fig:flat}.

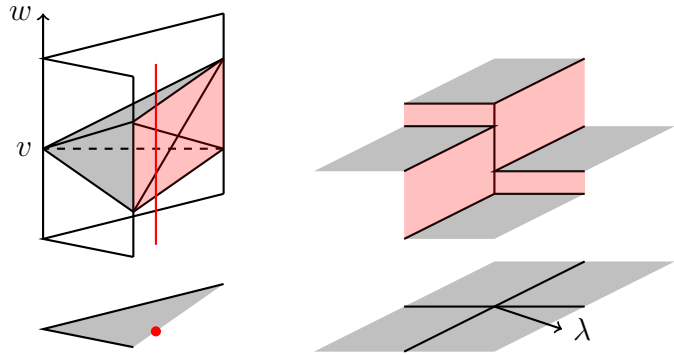
\begin{figure}[h]
\centering
\begin{tikzpicture}[scale=.6]

\draw [->, thick] (0,-2)--(0,0)--(0,3);
\draw [thick] (4,3)--(0,2)--(2,1.6);
\draw [thick] (4,-1)--(0,-2)--(2,-2.4);
\draw [thick] (4,3)--(4,-1);
\draw [thick] (2,1.6)--(2,-2.4);
\draw [thick] (4,2)--(0,0)--(2,.6)--(4,2);
\draw [thick, dashed] (4,0)--(0,0);
\draw [thick] (0,0)--(2,-1.4)--(4,0);
\draw [thick] (4,0)--(2,.56);
\draw [thick] (4,2)--(2,-1.4);
\draw [thick,red] (2.5,1.88)--(2.5,-2.13);

\fill [red, nearly transparent] (2,-1.4)--(4,0)--(4,2)--(2,.6)--(2,-1.4);
\fill [nearly transparent] (4,2)--(0,0)--(2,.6)--(4,2);
\fill [nearly transparent] (2,.6)--(2,-1.4)--(0,0);
\node [left] at (0,0) {$v$};
\node [left] at (0,3) {$w$};

\draw [thick] (4,-3)--(0,-4)--(2,-4.4);
\fill [nearly transparent] (4,-3)--(0,-4)--(2,-4.4);
\draw [fill, red] (2.5,-4.05 ) circle [radius=.1];

\draw [thick] (10,-1)--(10,1);
\draw [thick] (8,.5)--(10,.5)--(8,-.5);
\draw [thick] (12,-.5)--(10,-.5)--(12,.5);
\draw [thick] (8,1)--(10,1)--(12,2);
\draw [thick] (8,-2)--(10,-1)--(12,-1);
\fill [nearly transparent] (8,-2)--(10,-1)--(12,-1)--(10,-2);
\fill [nearly transparent] (8,.5)--(10,.5)--(8,-.5)--(6,-.5);
\fill [nearly transparent] (12,-.5)--(10,-.5)--(12,.5)--(14,.5);
\fill [nearly transparent] (8,1)--(10,1)--(12,2)--(10,2);
\fill [red, nearly transparent] (8,-2)--(10,-1)--(10,.5)--(8,-.5);
\fill [red, nearly transparent] (8,1)--(10,1)--(10,.5)--(8,.5);
\fill [red, nearly transparent] (12,-1)--(10,-1)--(10,-.5)--(12,-.5);
\fill [red, nearly transparent] (12,2)--(10,1)--(10,-.5)--(12,.5);

\draw [thick] (8,-3.5)--(12,-3.5);
\draw [thick] (8,-4.5)--(12,-2.5); 
\node[right] at (11.5, -4) {$\lambda$};
\draw [->, thick] (10,-3.5)--(11.5,-4);
\fill [nearly transparent] (6,-4.5)--(10,-2.5)--(14,-2.5)--(10,-4.5)--(6,-4.5);

\end{tikzpicture}
\caption{Neighborhood of the vertex $v$ in $\cc_\sigma$ and the fiber of $\pp^v_{\sigma, J}$ over the red line parallel to the edge and their projections along the $w$-vector.} 
\label{fig:flat}
\end{figure} 

Indeed, we just need to look at the facets of $\pp^v_{\sigma, J}$, which are of two types. Type 1: $|I|=2$ and $\nu$ is 1 chord short of being maximal. Type 2: $|I|=3$ and $\nu$ is maximal. We depict the two types and the poking by $w+\lambda$ from negative to positive cobounding facets of $H_{v,J}\times\Delta_\sigma$ in Fig. \ref{fig:type1} and \ref{fig:type2}.

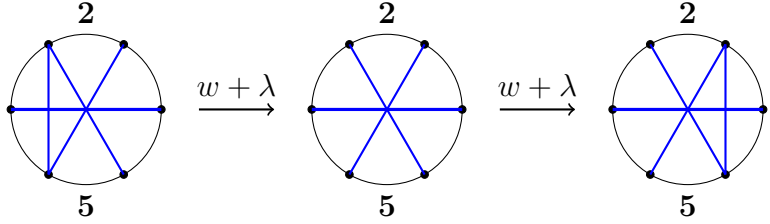
\begin{figure}[h]
\centering
\begin{tikzpicture}[scale=.5]
\draw (-8,0) circle [radius=2];
\draw [fill] (-10,0) circle [radius=.1];
\draw [fill] (-9,1.732) circle [radius=.1];
\draw [fill] (-7,1.732) circle [radius=.1];
\draw [fill] (-6,0) circle [radius=.1];
\draw [fill] (-7,-1.732) circle [radius=.1];
\draw [fill] (-9,-1.732) circle [radius=.1];

\node [below] at (-8,-2) {$ \bf 5$};
\node [above] at (-8,2) {$\bf 2$};

\draw [very thick, blue] (-10,0)--(-6,0);
\draw [thick, blue] (-9,1.732)--(-7,-1.732);
\draw [thick, blue] (-9,-1.732)--(-7,1.732);
\draw [thick, blue] (-9,-1.732)--(-9,1.732);

\draw [->, thick] (-5,0)--(-3,0);
\node [above] at (-4,0) {$w+\lambda$};

\draw (0,0) circle [radius=2];
\draw [fill] (2,0) circle [radius=.1];
\draw [fill] (1,1.732) circle [radius=.1];
\draw [fill] (-1,1.732) circle [radius=.1];
\draw [fill] (-2,0) circle [radius=.1];
\draw [fill] (-1,-1.732) circle [radius=.1];
\draw [fill] (1,-1.732) circle [radius=.1];

\node [below] at (0,-2) {$ \bf 5$};
\node [above] at (0,2) {$\bf 2$};

\draw [very thick, blue] (-2,0)--(2,0);
\draw [thick, blue] (-1,1.732)--(1,-1.732);
\draw [thick, blue] (1,1.732)--(-1,-1.732);

\draw [->, thick] (3,0)--(5,0);
\node [above] at (4,0) {$w+\lambda$};

\draw (8,0) circle [radius=2];
\draw [fill] (10,0) circle [radius=.1];
\draw [fill] (9,1.732) circle [radius=.1];
\draw [fill] (7,1.732) circle [radius=.1];
\draw [fill] (6,0) circle [radius=.1];
\draw [fill] (7,-1.732) circle [radius=.1];
\draw [fill] (9,-1.732) circle [radius=.1];

\node [below] at (8,-2) {$ \bf 5$};
\node [above] at (8,2) {$\bf 2$};

\draw [very thick, blue] (6,0)--(10,0);
\draw [thick, blue] (7,-1.732)--(9,1.732);
\draw [thick, blue] (9,-1.732)--(7,1.732);
\draw [thick, blue] (9,-1.732)--(9,1.732);

\end{tikzpicture}
\caption{Type 1: $\lambda$ is parallel to the facet, $w$ is poking through.} 
\label{fig:type1}
\end{figure} 

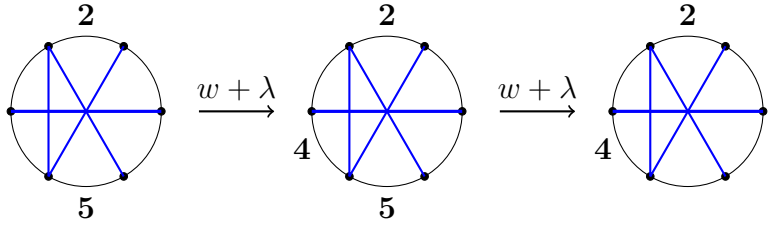
\begin{figure}[h]
\centering
\begin{tikzpicture}[scale=.5]
\draw (-8,0) circle [radius=2];
\draw [fill] (-10,0) circle [radius=.1];
\draw [fill] (-9,1.732) circle [radius=.1];
\draw [fill] (-7,1.732) circle [radius=.1];
\draw [fill] (-6,0) circle [radius=.1];
\draw [fill] (-7,-1.732) circle [radius=.1];
\draw [fill] (-9,-1.732) circle [radius=.1];

\node [below] at (-8,-2) {$ \bf 5$};
\node [above] at (-8,2) {$\bf 2$};

\draw [very thick, blue] (-10,0)--(-6,0);
\draw [thick, blue] (-9,1.732)--(-7,-1.732);
\draw [thick, blue] (-9,-1.732)--(-7,1.732);
\draw [thick, blue] (-9,-1.732)--(-9,1.732);

\draw [->, thick] (-5,0)--(-3,0);
\node [above] at (-4,0) {$w+\lambda$};

\draw (0,0) circle [radius=2];
\draw [fill] (2,0) circle [radius=.1];
\draw [fill] (1,1.732) circle [radius=.1];
\draw [fill] (-1,1.732) circle [radius=.1];
\draw [fill] (-2,0) circle [radius=.1];
\draw [fill] (-1,-1.732) circle [radius=.1];
\draw [fill] (1,-1.732) circle [radius=.1];

\node [below] at (0,-2) {$ \bf 5$};
\node [above] at (0,2) {$\bf 2$};
\node [left] at (-1.732,-1) {$\bf 4$};

\draw [very thick, blue] (-2,0)--(2,0);
\draw [thick, blue] (-1,1.732)--(1,-1.732);
\draw [thick, blue] (1,1.732)--(-1,-1.732);
\draw [thick, blue] (-1,-1.732)--(-1,1.732);

\draw [->, thick] (3,0)--(5,0);
\node [above] at (4,0) {$w+\lambda$};

\draw (8,0) circle [radius=2];
\draw [fill] (10,0) circle [radius=.1];
\draw [fill] (9,1.732) circle [radius=.1];
\draw [fill] (7,1.732) circle [radius=.1];
\draw [fill] (6,0) circle [radius=.1];
\draw [fill] (7,-1.732) circle [radius=.1];
\draw [fill] (9,-1.732) circle [radius=.1];

\node [above] at (8,2) {$\bf 2$};
\node [left] at (6.268,-1) {$\bf 4$};

\draw [very thick, blue] (6,0)--(10,0);
\draw [thick, blue] (7,-1.732)--(9,1.732);
\draw [thick, blue] (9,-1.732)--(7,1.732);
\draw [thick, blue] (7,-1.732)--(7,1.732);

\end{tikzpicture}
\caption{Type 2: $w$ is parallel to the facet, $\lambda$ is poking through.} 
\label{fig:type2}
\end{figure} 

A type 1 facet is given by the hyperplane $\theta_j-\theta_i=\pi$, where $\{i,j\}=I$. The vector $\lambda$ is parallel to it and the vector $w$ agrees with its negative/positive coorientation. A type 2 facet is given by the hyperplane $x_i=x_j$, where $I=\{i,j,k\}$ and $i,j$ are in the same part $I_-$ or $I_+$. Here the vector $w$ is parallel and $\lambda$ agrees with the coorientation. 
The right/left and negative/positive choices are made such that the projection from $H_{v,J}\times\Delta_\sigma$ along $w+\lambda$ defines a PL homeomorphism from $\pp^v_{\sigma, J}$ to its quotient image $\R^{|J|-2} \times (\cc^v_\sigma/w)$, thus giving a product structure  $H_{v,J}\times\Delta_\sigma \cong \pp^v_{\sigma, J} \times \R \<w+\lambda\>$.
\end{proof}

\begin{lemma}\label{lemma:complement}
The complement $\Delta_J\times\Delta_\sigma \setminus \pp_{\sigma, J}$ is homotopic to a circle.
\end{lemma}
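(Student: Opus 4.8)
The plan is to exploit that, under the projection $p\colon\Delta_J\times\Delta_\sigma\to\Delta_J$, the set $\pp_{\sigma, J}$ lies entirely over the tropical hyperplane $H_J:=H\cap\Delta_J$, and then to compute the complement via the nerve of a natural cover. For $x\in\Delta_J$ write $I(x):=\{i\in J: x_i=\max_{k}x_k\}$. From $\pp=\bigcup_{I\subseteq J}(H_{IJ}\times\cc^I)$ and the elementary inclusion $\cc^{S}\subseteq\cc^{S'}$ for $S\subseteq S'$ one reads off
$$\pp_{\sigma, J}=\bigl\{(x,y)\in\Delta_J\times\Delta_\sigma\ :\ x\in H_J,\ y\in\cc^{I(x)}\cap\Delta_\sigma\bigr\},$$
where $x\in H_J\iff|I(x)|\ge2$ and we set $\cc^{\{v\}}:=\varnothing$. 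Hence the complement $X:=\Delta_J\times\Delta_\sigma\setminus\pp_{\sigma, J}$ has fiber all of $\Delta_\sigma$ over $\Delta_J\setminus H_J$ and fiber $\Delta_\sigma\setminus\cc^{I(x)}$ over $x\in H_J$.

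First I would cover $X$ by the closed sets $A_v:=\{(x,y)\in X:x_v=\max_k x_k\}$, $v\in J$, so that $A_S:=\bigcap_{v\in S}A_v=\{(x,y)\in X:I(x)\supseteq S\}$ for $\varnothing\neq S\subseteq J$. Since $Y_S:=\{x\in\Delta_J:x_i=x_{i'}\ge x_k\ \text{for }i,i'\in S\}$ is convex, the straight-line homotopy $\Psi_t(x,y)=\bigl((1-t)x+t\,c_S,\,y\bigr)$, with $c_S$ the barycenter of the cube $H_{S,J}$ (or the vertex $e_v$ if $S=\{v\}$), deformation retracts $A_S$ onto $\{c_S\}\times(\Delta_\sigma\setminus\cc^{S})$: for $t>0$ one has $I\bigl((1-t)x+tc_S\bigr)=S$, and $y\notin\cc^{I(x)}\supseteq\cc^{S}$ keeps the path off $\pp_{\sigma, J}$; thus $A_S\simeq\Delta_\sigma\setminus\cc^{S}$. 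Next I would describe $\Delta_\sigma\setminus\cc^{S}$ directly: $y\notin\cc^{S}$ exactly when the arguments indexed by $S$ all fit in an open half-circle, i.e. when precisely one of the arcs between cyclically consecutive such arguments has length $>\pi$; each of these conditions cuts out a convex (hence contractible) region and they are pairwise disjoint. So $\Delta_\sigma\setminus\cc^{S}$ is a disjoint union of contractible pieces indexed by the \emph{gaps} of $S$ (their number is the number of parts of $\sigma$ met by $S$, and is $|S|$ when $\sigma$ is maximal), and the inclusion $\Delta_\sigma\setminus\cc^{S\cup\{v\}}\hookrightarrow\Delta_\sigma\setminus\cc^{S}$ sends the two gaps of $S\cup\{v\}$ adjacent to $v$ to the merged gap of $S$ and fixes all the others.

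By the (homotopy-colimit form of the) nerve theorem --- applied to suitable open thickenings of the $A_v$ with the same intersection pattern, or after the standard regular-neighbourhood retraction of $X$ onto a subcomplex of a subdivision of $\Delta_J\times\Delta_\sigma$ (the $A_v$ then becoming subcomplexes) --- we get $X\simeq\operatorname{hocolim}_{\varnothing\neq S\subseteq J}A_S\simeq\operatorname{hocolim}_{\varnothing\neq S\subseteq J}(\Delta_\sigma\setminus\cc^{S})$. As every term is homotopy discrete with the gluing maps above, this homotopy colimit is homotopy equivalent to the order complex of the poset $\mathcal E$ whose objects are pairs $(S,g)$ with $g$ a gap of $S$, and in which $(S,g)\ge(T,h)$ iff $T\subseteq S$ and $h$ is the unique gap of $T$ containing $g$. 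Thus the lemma reduces to the combinatorial claim $|\mathcal E|\simeq S^1$, which for $|J|=2$ is the $4$-cycle $K_{2,2}$, and which in general should be proved by exhibiting an explicit elementary collapse (or discrete Morse function) on $|\mathcal E|$ onto an embedded circle realizing the cyclic "merge the gaps" pattern.

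I expect this last combinatorial identification to be the main obstacle, since one must control a genuinely $2$-dimensional model rather than just its homology; as a consistency check, the Mayer--Vietoris (\v{C}ech) spectral sequence of the cover collapses onto its $q=0$ row and identifies $H_*(X)$ with the homology of the complex $C_p=\bigoplus_{|S|=p+1}\Z[\,\text{gaps of }S\,]$ with the "delete a vertex / merge two gaps" differential, which a routine induction on $|J|$ shows equals $H_*(S^1)$, while van Kampen applied to the $2$-truncation of the nerve model (whose $1$-skeleton is the complete graph on $J$ with every edge doubled) gives $\pi_1(X)\cong\Z$. Finally, the non-maximal cases --- $K\subsetneq J$, or $\sigma$ not maximal --- reduce to the above exactly as in the proof of Lemma~\ref{lemma:flat}, by splitting off a contractible simplicial-cone factor that does not affect the homotopy type of the complement; combined with Lemma~\ref{lemma:flat} and Proposition~\ref{prop:unknotted} this makes $(\Delta_J\times\Delta_\sigma,\pp_{\sigma, J})$ an unknotted ball pair.
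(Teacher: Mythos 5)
Your approach is genuinely different from the paper's. The paper works purely with the polyhedral structure of $\dsd\Delta_J\times\am\,\Delta_\sigma$: it identifies the subcomplex $L_{\sigma,J}$ of faces $H_{IK}\times\am_\nu$ with $K$ in a single part of $[\nu]$ as the complement of the open star of $\pp_{\sigma,J}$, shows that the open star is a regular neighbourhood (by proving each $F\cap\pp_{\sigma,J}$ is collapsible inside its face $F$, so that the full complement deformation retracts onto $L_{\sigma,J}$), and then simply \emph{cites} Lemma~9 of \cite{RZ}, which already establishes that $L_{\sigma,J}$ collapses to a circle. Your plan instead uses the projection to $\Delta_J$, a closed cover $\{A_v\}_{v\in J}$, the (co)homotopy nerve theorem, and the Grothendieck construction to reduce to a poset $\mathcal E$ of pairs $(S,\text{gap})$; this nicely exposes the geometry of $\Delta_\sigma\setminus\cc^S$ as a disjoint union of convex pieces. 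The two combinatorial objects are closely related: a non-interlacing pair $(K,V)\in L_{\sigma,J}$ is essentially the same data as a subset $K\subseteq J$ together with a choice of arc of $\sigma$ containing it, which is precisely a gap.

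However, there is a genuine gap, and you flag it yourself: you never prove that $|\mathcal E|\simeq S^1$. That step is exactly the nontrivial combinatorial content of the lemma, and the whole strategy rests on it. Your "consistency checks" are not a substitute: computing $H_*(X)\cong H_*(S^1)$ via the \v{C}ech spectral sequence and $\pi_1(X)\cong\Z$ via van Kampen does \emph{not} in general determine the homotopy type of $X$ --- you would also need to kill the higher homotopy (e.g., show the universal cover is acyclic), and neither check does that. So without an actual discrete Morse function or collapsing scheme on $|\mathcal E|$ the proof is incomplete. This is precisely where the paper economizes by invoking the already-proved collapse of $L_{\sigma,J}$ to a circle in \cite{RZ}; if you want to keep the nerve route, you should either reprove the collapse for $\mathcal E$ honestly (an induction on $|J|$ that merges two gaps at a time is plausible but needs to be written down and checked for non-maximal $\sigma$ as well), or exhibit an explicit poset isomorphism between the barycentric model of $\mathcal E$ and the paper's $L_{\sigma,J}$ and cite \cite{RZ}. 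Also, the phrase "the non-maximal cases --- $K\subsetneq J$, or $\sigma$ not maximal --- reduce ... by splitting off a contractible simplicial-cone factor" does not transfer correctly from Lemma~\ref{lemma:flat}: that lemma splits off a cone factor at a \emph{vertex}, whereas here $J$ and $\sigma$ are fixed globally and there is no $K$; for non-maximal $\sigma$ your poset $\mathcal E$ is genuinely different (the number of gaps of $S$ is the number of parts of $\sigma$ met by $S$), and the combinatorial claim has to be proved in that generality, not reduced away.
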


\begin{proof}
Let $L_{\sigma,J}$ be the subcomplex of $\Delta_J \times \Delta_\sigma$ consisting of pairs $( \sigma', K)\preceq (\sigma,J)$ such that $\sigma'$ does {\em not} divide $K$. That is  $L_{\sigma,J}$ consists of {\em non-interlacing} pairs $(K,V)$ (that is the entire $K$ lies between two vertices), where $K\subseteq J$ and $V$ is a subset of the vertices of~$\sigma$. Lemma 9 in \cite{RZ} asserts that $L_{\sigma,J}$ collapses to a circle.
Next we will show that the complement of the open star neighborhood of $\pp_{\sigma, J}$ in $\dsd \Delta_J\times \am \, \Delta_\sigma $ is a refinement of~$L_{\sigma,J}$.

Indeed, a face $H_{IK}\times \am_\nu$ belongs to the open star neighborhood of $\pp_{\sigma, J}$ if and only it has a vertex in $\pp_{\sigma, J}$. A vertex of $H_{IK}\times \am_\nu$ is given by a subset $I'$ between $I$ and $K$ and a single chord in $\nu$. Decreasing $K$ does not do anything in terms of changing divisibility of $I$ by $\nu$. Increasing $I$, on the other hand, does. Thus $H_{IK}\times \am_\nu$ has no vertex in $\pp_{\sigma, J}$ if and and only if no chord in $\nu$ divides $K$. Since all chords are intersecting this can happen only if $K$ belong to a single part of $[\nu]$. Conversely, if $K$ is in a single part of $[\nu]$, then clearly no chord divides it.

Now notice that the union of all faces $H_{IK}$ for a fixed $K$ is just the dualizing subdivision of the face $\Delta_K\subseteq \Delta_J$. Specifying a net $\nu$ with a prefixed set of vertices $V$ is the alcove decomposition of the corresponding face $\Delta_V$ of $\Delta_\sigma$.

Finally we will show that $\Delta_J\times\Delta_\sigma \setminus \pp_{\sigma, J}$ is homotopic to the complement of the open star neighborhood of $\pp_{\sigma, J}$ in $\dsd \Delta_J\times \am \, \Delta_\sigma$. Consider a face $F:= H_{IK}\times \am_\nu$ and let $M:=F\cap \pp$, which we can assume to be a proper subcomplex of $F$. We will show that $M$ is collapsible. Then its regular neighborhood $N(M)$ in $F$ is a ball (see, e.g. \cite[Theorem 3.26]{RS}) and then its (closed) complement collapses to $\partial F \setminus N(M)$.
Thus inductively on dimension we can collapse all faces in $\Delta_J\times\Delta_\sigma \setminus \pp_{\sigma, J}$ which belong to the open star neighborhood of $\pp_{\sigma, J}$.

To see that $M$ is collapsible we note that which alcove face $\am_{\nu'}\subseteq \am_\nu$ sits over a cubical face $H_{I'K'}\subseteq H_{IK}$ depends only on~$I'$. Thus the lattice of faces with a fixed $\am_{\nu'}$ is Boolean on the $K'$-index and thus inductively collapses to just the vertex $H_{KK}$. At last we collapse the remaining alcove $\am_{\nu_0}$ which sits over the vertex $H_{KK}$, where $\nu_0\subseteq \nu$ contains all chords from $\nu$ dividing $K$.
\end{proof}

\begin{proposition}\label{prop:pt}
The ball pair $(\Delta_J\times\Delta_\sigma , \pp_{\sigma, J})$ is unknotted. 
\end{proposition}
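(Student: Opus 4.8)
The plan is to check that $(\Delta_J\times\Delta_\sigma,\pp_{\sigma,J})$ meets the hypotheses of Proposition \ref{prop:unknotted}. As already observed, it is a proper ball pair, and since $\dim\Delta_J\times\Delta_\sigma=(|J|-1)+(|\sigma|-1)=|J|+|\sigma|-2$ while $\dim\pp_{\sigma,J}=\dim P_{\sigma,J}=|J|+|\sigma|-4$, it has codimension $2$. Lemma \ref{lemma:flat} provides local flatness and Lemma \ref{lemma:complement} provides that the complement $\Delta_J\times\Delta_\sigma\setminus\pp_{\sigma,J}$ has the homotopy type of a circle. Hence, as soon as the ambient dimension $|J|+|\sigma|-2$ differs from $4$, part (1) of Proposition \ref{prop:unknotted} applies verbatim and the pair is unknotted.

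The remaining case is $|J|+|\sigma|=6$, a proper locally flat ball pair $(B^4,B^2)$, where one has to invoke part (2) of Proposition \ref{prop:unknotted}. I would first analyze the boundary sphere pair $(S^3,S^1):=\partial(\Delta_J\times\Delta_\sigma,\pp_{\sigma,J})$. The facets of $\Delta_J\times\Delta_\sigma$ are the products $\Delta_{J'}\times\Delta_\sigma$ with $J'=J\setminus\{j\}$ and $\Delta_J\times\Delta_{\sigma'}$ with $\sigma'$ a one-step coarsening of $\sigma$, and, by a direct check parallel to the reductions in the proof of Lemma \ref{lemma:flat}, $\pp_{\sigma,J}$ meets such a facet in $\pp_{\sigma,J'}$, respectively $\pp_{\sigma',J}$, possibly up to a product with a simplicial cone on the frozen coordinates. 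Because a product of an unknotted ball pair with a ball is again unknotted, and because each facet pair has ambient dimension $3$, all facet pairs are unknotted by the case already treated; feeding this back, together with Lemma \ref{lemma:complement}, one sees that $S^3\setminus S^1\simeq S^1$ as required. The essential remaining point is to identify $(\Delta_J\times\Delta_\sigma,\pp_{\sigma,J})$, up to PL homeomorphism, with the cone on $(S^3,S^1)$. For this I would look for an interior point $p$ of $\pp_{\sigma,J}$ with respect to which both the convex body $\Delta_J\times\Delta_\sigma$ and the polyhedron $\pp_{\sigma,J}$ are star-shaped --- a reasonable place to search being the relative interior of a top face $H_{KK}\times\am_\nu$ of $\pp_{\sigma,J}$ with $\nu$ a net all of whose chords divide $K$ --- so that the obvious cone structure on the ambient ball from $p$ restricts to one on the subball; Proposition \ref{prop:unknotted}(2) then closes the case $|J|+|\sigma|=6$.

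The step I expect to be the real obstacle is this last one: producing, in ambient dimension $4$, an honest conical structure on the \emph{pair}, not merely on the ambient ball. This is exactly the four-dimensional subtlety that makes Proposition \ref{prop:unknotted} carry a separate clause, and it is where the collapses built in the proofs of Lemmas \ref{lemma:flat} and \ref{lemma:complement}, together with the recursive description of the boundary faces, have to be assembled into a single star-shaped picture with cone point lying in $\pp_{\sigma,J}$. The accompanying bookkeeping for non-maximal $\sigma$ and for $K\subsetneq J$ (the ``product with a simplicial cone'' reductions) is routine but needs to be carried through uniformly.
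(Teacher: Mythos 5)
Your reduction to codimension $2$ and the treatment of all ambient dimensions $\ne 4$ is exactly the paper's argument: Lemma~\ref{lemma:flat}, Lemma~\ref{lemma:complement}, and Proposition~\ref{prop:unknotted}(1). Where you diverge, and where the gap is, is in ambient dimension $4$.

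Two problems with your dimension-$4$ plan. First, the claim that the boundary pair satisfies $S^3\setminus S^1\simeq S^1$ does not follow from the unknottedness of the facet pairs: gluing six unknotted $(B^3,B^1)$ pairs along their boundaries is precisely how one constructs knotted circles in $S^3$, so ``feeding back'' the facet information gives nothing by itself, and Lemma~\ref{lemma:complement} concerns the interior complement, not the boundary one. (Alexander duality would give the homology of $S^3\setminus S^1$, but not $\pi_1$, which is what distinguishes the unknot.) Second, and more seriously, the existence of a common cone point for the pair is not established. You flag this yourself as ``the real obstacle,'' and indeed nothing in the proposal produces a point $p$ from which $\pp_{\sigma,J}$ --- a union of many cells $H_{IK}\times\am_\nu$ that is not obviously star-shaped --- is a cone; without it, Proposition~\ref{prop:unknotted}(2) cannot be invoked. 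So as written, the argument in ambient dimension $4$ is incomplete.

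The paper avoids both difficulties by a different route. It first notes that with $|J|\ge2$, $|\sigma|\ge2$ and $|J|+|\sigma|=6$, the cases $\Delta_J^1\times\Delta_\sigma^3$ and $\Delta_J^3\times\Delta_\sigma^1$ are trivial (in each, $\pp_{\sigma,J}$ is essentially a flat $2$-ball), leaving only $\Delta_J^2\times\Delta_\sigma^2$. There it introduces the ober-tropical analogue $\mathcal O\pp_{\sigma,J}$ (a union of six squares built from the two trivalent skeleta), which is unknotted by \cite{RZ} --- that case is easy because $\mathcal O\pp_{\sigma,J}$ \emph{does} have the obvious cone structure at the central vertex. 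It then exhibits an explicit ambient isotopy from $\pp_{\sigma,J}$ to $\mathcal O\pp_{\sigma,J}$ inside $\Delta_J^2\times\Delta_\sigma^2$, given by three elementary cellular ``raising-the-roof'' moves relative to the boundary (each extends to an ambient isotopy by \cite[Prop.~4.15]{RS}). This converts the troublesome $4$-dimensional case into a concrete combinatorial move rather than an appeal to Proposition~\ref{prop:unknotted}(2). If you want to salvage your cone-point approach you would need to actually exhibit the star-shaped structure on $\pp_{\sigma,J}$, which the paper's choice of strategy suggests is not straightforward.
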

\begin{proof}
The $\dim \Delta_J \times \Delta_\sigma \ne 4$ case follows from Lemmas \ref{lemma:flat} and \ref{lemma:complement}, and Proposition~\ref{prop:unknotted} part (1). The cases $\Delta_J^1 \times \Delta_\sigma^3$ and $\Delta_J^3\times \Delta_\sigma^1$ are trivial. Thus it only remains to show the case $\Delta_J ^2\times \Delta_\sigma^2$.
It may be possible to extend the explicit isotopy from \cite{Ca17} to an ambient one in this low-dimensional case. However it is a lot easier to compare $\pp_{\sigma, J}$ to its ober-tropical analog $\mathcal O\pp_{\sigma, J}$, which we will now describe. 

We consider the trivalent skeleton $S \subset \dsd \Delta_\sigma$ of the coamoeba triangle similar to the spine $H$ of the amoeba. Then we define $\mathcal O\pp_{\sigma, J}$ to be the union of 6 squares, see Fig. \ref{fig:ober}:
$$\mathcal O\pp_{\sigma, J} = \bigcup_{ij\ne kl} H_{ij}\times S_{kl} \subset \Delta_J \times \Delta_\sigma.
$$
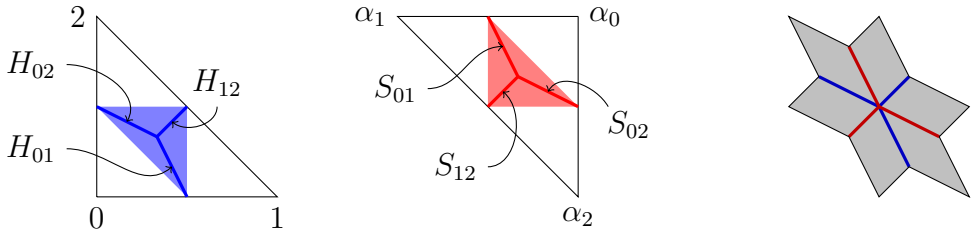
\begin{figure}[h]
\centering
\begin{tikzpicture}[scale=.8]

\draw (0,0)--(3,0)--(0,3)--(0,0);
\draw [very thick, blue] (0,1.5)--(1,1)--(1.5,0);
\draw [very thick, blue] (1.5,1.5)--(1,1);
\fill [blue, semitransparent] (0,1.5)--(1.5,0)--(1.5,1.5)--(0,1.5);

\draw (8,3)--(5,3)--(8,0)--(8,3);
\draw [very thick, red] (6.5,3)--(7,2)--(8,1.5);
\draw [very thick, red] (6.5,1.5)--(7,2);
\fill [red, semitransparent] (6.5,3)--(8,1.5)--(6.5,1.5)--(6.5,3);

\draw [very thick, blue] (12,2)--(13,1.5)--(13.5,.5);
\draw [very thick, blue] (13.5,2)--(13,1.5);
\draw [very thick, red] (12.5,1)--(13,1.5);
\draw [very thick, red] (12.5,2.5)--(13,1.5);
\draw [very thick, red] (14,1)--(13,1.5);
\draw  (14,1)--(14.5,0)--(13.5,.5)--(13,0)--(12.5,1)--(11.5,1.5)--(12,2)--(11.5,3) --(12.5,2.5)--(13,3)--(13.5,2)--(14.5,1.5)--(14,1);
\fill [nearly transparent]  (14,1)--(14.5,0)--(13.5,.5)--(13,0)--(12.5,1)--(11.5,1.5)--(12,2)--(11.5,3) --(12.5,2.5)--(13,3)--(13.5,2)--(14.5,1.5)--(14,1);

\node [below] at (0,0) {$0$};
\node [below] at (3,0) {$1$};
\node [left] at (0,3) {$2$};
\node [right] at (8,3) {$\alpha_0$};
\node [below] at (8,0) {$\alpha_2$};
\node [left] at (5,3) {$\alpha_1$};

\draw [->] (-.5,2.2) to [out=0 ,in=60] (.5,1.25);
\draw [->] (-.5,.8) to [out=0 ,in=220] (1.25,0.5);
\draw [->] (2,1.5) to [out=270 ,in=320] (1.25,1.25);
\node [left] at (-.5,2.2) {$H_{02}$};
\node [left] at (-.5,.8) {$H_{01}$};
\node [above] at (2,1.5) {$H_{12}$};

\draw [->] (5.5,1.8) to [out=0 ,in=180] (6.75,2.5);
\draw [->] (8.5,1.5) to [out=120 ,in=60] (7.5,1.75);
\draw [->] (6.5,.5) to [out=0 ,in=320] (6.75,1.75);
\node [left] at (5.5,1.8) {$S_{01}$};
\node [right] at (8.3,1.2) {$S_{02}$};
\node [left] at (6.5,.5) {$S_{12}$};

\end{tikzpicture}
\caption{The two skeleta and the ober-tropical cell $\mathcal O \pp_{\sigma,J}\subset \Delta_J ^2\times \Delta_\sigma^2 $.} 
\label{fig:ober}
\end{figure} 

The ober-tropical pair $(\Delta_J ^2\times \Delta_\sigma^2, \mathcal O \pp_{\sigma,J})$ was shown in \cite{RZ} to be unknotted, which in this dimension is almost trivial. Just look at how the boundary circle $\partial \mathcal O \pp_{\sigma,J}$  sits inside the boundary 3-sphere $\partial (\Delta_J ^2\times \Delta_\sigma^2)$, namely note that its complement collapses to a circle. Then note that the ball pair $(\Delta_J ^2\times \Delta_\sigma^2, \mathcal O \pp_{\sigma,J})$ is the cone over its boundary.

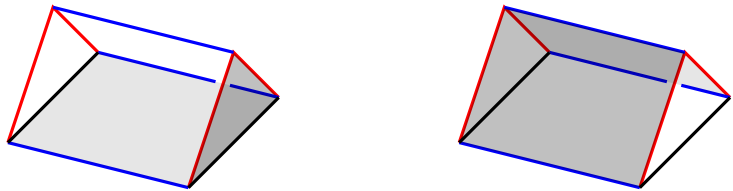
\begin{figure}[h]
\centering
\begin{tikzpicture}[scale=.6]

\draw [very thick, red] (0,0)--(1,3)--(2,2);
\draw [very thick, red] (-4,1)--(-3,4)--(-2,3);
\draw [very thick, blue] (0,0)--(-4,1);
\draw [very thick, blue] (1,3)--(-3,4);
\draw [very thick, blue] (.6,2.35)--(-2,3);
\draw [very thick, blue] (2,2)--(.92,2.27);
\draw [very thick] (0,0)--(2,2);
\draw [very thick] (-4,1)--(-2,3);

\fill [very nearly transparent]  (0,0)--(-4,1)--(-2,3)--(2,2)--(0,0);
\fill [nearly transparent]  (0,0)--(1,3)--(2,2)--(0,0);

\draw [very thick, red] (10,0)--(11,3)--(12,2);
\draw [very thick, red] (6,1)--(7,4)--(8,3);
\draw [very thick, blue] (10,0)--(6,1);
\draw [very thick, blue] (11,3)--(7,4);
\draw [very thick, blue] (10.6,2.35)--(8,3);
\draw [very thick, blue] (12,2)--(10.92,2.27);
\draw [very thick] (10,0)--(12,2);
\draw [very thick] (6,1)--(8,3);

\fill [nearly transparent]  (10,0)--(6,1)--(7,4)--(11,3)--(10,0);
\fill [very nearly transparent] (12,2)--(11,3)--(7,4)--(8,3)--(12,2);

\end{tikzpicture}
\caption{The cellular ``roofing'' move from $\pp_{\sigma,J}$ to $\mathcal O \pp_{\sigma,J}$ over the leg $H_{02}$.} 
\label{fig:isotopy}
\end{figure} 

Finally to see an isotopy from $\pp_{\sigma,J}$ to $\mathcal O \pp_{\sigma,J}$ we refine the central triangle in $\Delta_\sigma$ by its skeleton $S$ and perform three ``raising the roof''  elementary cellular moves relative boundary. The moves are in the triangular prisms over the legs of $H$. Fig. \ref{fig:isotopy} shows one of the three, the back-facing triangle is on the boundary. Any elementary move can be extended to an ambient isotopy, see e.g. \cite[Prop.~4.15]{RS}. This finishes the proof.
\end{proof}

\begin{proof}[Proof of Theorem \ref{thm:main}]
Propositions \ref{prop:RZ} and \ref{prop:pt} provide homeomorphisms of ball pairs $(\Delta_J\times\Delta_\sigma , P_{\sigma, J})$ and $(\Delta_J\times\Delta_\sigma , \pp_{\sigma, J})$ which respect the stratifications. By the Alexander trick, the homeomorphisms are homotopic to the identity on all cells. An inductive application of Proposition~\ref{prop:unknotted} part (3) on strata gives an isotopy which respects the stratification.
\end{proof}


\begin{thebibliography}{RSTZ14}


\bibitem[Ca17]{Ca17}
R. Caputo.
{\em The isotopy problem for the phase tropical line}.
Beitr Algebra Geom (2019), doi:10.1007/s13366-019-00456-9

%
%

%

\bibitem[GKZ94]{GKZ}
I. Gelfand, M. Kapranov, and A. Zelevinsky. 
\newblock {\em Discriminants, resultants, and multidimensional determinants}.
\newblock Birkh\"auser Boston, Inc., Boston, MA, 1994. 


\bibitem[G01]{gross}
M. Gross.
\newblock {\em Topological mirror symmetry}.
\newblock Inv. Math., volume 144 (2001), 75--137.


\bibitem[KZ18]{KZ}
G. Kerr and I. Zharkov.
\newblock {\em Phase tropical hypersurfaces}. 
\newblock Geometry \& Topology 22 (2018), 3287--3320.

%
%
%

\bibitem[Mi04]{PP}
G. Mikhalkin.
\newblock {\em Decomposition into pairs-of-pants for complex algebraic hypersurfaces}. 
\newblock Topology Vol. 43 (2004), Issue 5, 1035--1065.

%

%
%

%
%
 
%
\bibitem[RS72]{RS}
C.~P.~Rourke  and B.~J.~Sanderson.
\newblock {\em Introduction to Piecewise-Linear Topology}.
\newblock Springer-Verlag, 1972.

\bibitem[RZ20]{RZ}
H. Ruddat and I. Zharkov.
\newblock {\em Tailoring a pair-of-pants}.
\newblock \url{https://arxiv.org/abs/2001.08267}.
  
  \bibitem[RZ20]{RZ2}
H. Ruddat and I. Zharkov.
\newblock {\em Compactifying torus fibrations over integral affine manifolds with singularities}.
\newblock \url{https://arxiv.org/abs/2003.08521}.
  


%

%

%
\bibitem[SYZ96]{SYZ} A.~Strominger, S.T.~Yau, E.~Zaslow:
\emph{Mirror symmetry is T-duality},
Nuclear Physics B~\textbf{479} (1996), 243--259.

%
\bibitem[Vi11]{Viro11}
O. Viro.
\newblock {\em On basic concepts of tropical geometry}. 
\newblock Trudy Mat. Inst. Steklova, vol. 273, (2011), No.~1, 271-303. 


\end{thebibliography}
\end{document}